\documentclass{amsart}
\usepackage{amsmath,amssymb,amsfonts}
\usepackage{graphics}
\usepackage{amsthm}
\usepackage{latexsym}
\usepackage{epsf}

\theoremstyle{plain}
\newtheorem{theorem}[equation]{Theorem}

\newtheorem{prop}[equation]{Proposition}

\theoremstyle{definition}
\newtheorem{definition}[equation]{Definition}

\newtheorem{remark}[equation]{Remark}

\numberwithin{equation}{section}

\input xy
\xyoption{all}

\begin{document}

\title[Action graphs]{Action graphs, rooted planar forests, and self-convolutions of the Catalan numbers}

\author{Julia E.\ Bergner, Cedric Harper, Ryan Keller, and Mathilde Rosi-Marshall}
\address{Department of Mathematics, University of Virginia,
Charlottesville, VA 22904}
\email{jeb2md@virginia.edu} 
\email{cbh2ta@virginia.edu} 
\email{rjk7bb@virginia.edu} 
\email{mgr3km@virginia.edu}

\thanks{The first- and fourth-named authors were partially supported by NSF CAREER award DMS-1352298.  The third-named author was partially supported by the USOAR program at the University of Virginia in 2017-18.}

\maketitle

\begin{abstract}
We show that families of action graphs, with initial graphs which are linear of varying length, give rise to self-convolutions of the Catalan sequence.  We prove this result via a comparison with planar rooted forests with a fixed number of trees.
\end{abstract}

\section{Introduction}

In a recent paper \cite{alv}, the first-named author, Alvarez, and Lopez showed that a certain family of directed graphs, called \emph{action graphs}, give a new way to produce the Catalan numbers.  These graphs arose in work of the first-named author and Hackney, with the goal of understanding the structure of a rooted category action on another category \cite{reedy}.  However, in that work we considered a much larger collection of such graphs than those considered in the paper with Alvarez and Lopez.  In this paper, we take up the question of what sequences arise from the other families of action graphs, and we prove that they produce self-convolutions of the Catalan sequence.  

The Catalan sequence arises in many contexts in combinatorics  \cite[A000108]{oeis}; a long list of ways to obtain the Catalan numbers is found in Stanley's books \cite{stanleycat}, \cite{stanley} and online addendum \cite{catadd}; see also Koshy's book \cite{koshy}.  Recall that the 0th Catalan number is $C_0=1$, and, for any $n \geq 0$, the $(n+1)$st Catalan number $C_{n+1}$ is given by the formula
\[ C_{n+1} = \sum_{i=0}^n C_i C_{n-i}. \]

Here, we are interested in convolutions of this sequence with itself.  To this end, let us recall the definition of the convolution of sequences.

\begin{definition}
Given two sequences $A$ and $B$, their \emph{convolution} is given by
\[ (A\ast B)_n=\sum_{m=0}^n A_m B_{n-m} \]
The \emph{self-convolution} of a sequence $A$ is the convolution $A \ast A$, which we denote by $A^1$.  More generally, we let 
\[ A^k = \underbrace{A \ast \cdots \ast A}_{k+1}, \]
and in particular $A^0$ is simply the original sequence $A$.
\end{definition}

\begin{remark}
The notation we have chosen is arguably confusing, as one might prefer to have $A^1$ denote the sequence itself and $A^2$ the convolution $A \ast A$.  However, we want to think of the index $k$ as telling us how many convolutions we have done, rather than how many copies of the sequence have been convolved.  Furthermore, our indexing is convenient for the comparison we undertake here.

So as not to encumber the notation unnecessarily, we denote the $n$th term of the $k$th convolution by $A_n^k$ rather than $(A^k)_n$.
\end{remark}

Applying this definition to the Catalan sequence, observe that the sequence $C^1$ begins as
\[ C^1_0=1, C^1_1=2, C^1_2=5, C^1_3=14, \ldots, \]
which is simply a shift of the Catalan sequence, whereas $C^2$ begins to look quite different:
\[ C^2_0=1, C^2_1=3, C^2_2=9, \ldots. \]

In Section \ref{kext}, we define the general families of action graphs that we want to consider.  Then in Section \ref{prf} we introduce planar rooted forests and prove a connection with self-convolutions of Catalan numbers.  Finally, in Section \ref{comp} we give a bijection between certain sets of planar rooted forests and our general action graphs.

\section{$k$-Extended action graphs} \label{kext}

In this section, we recall the definition of action graph as it appeared in previous work \cite{alv} and generalize to the setting we wish to consider.  To begin, let us recall the definition of a directed graph that we use here.

\begin{definition}
A \emph{directed graph} is a pair $G=(V,E)$ where $V$ is a set whose elements are called \emph{vertices} and $E$ is a set of ordered pairs of vertices, called \emph{edges}.  Given an edge $e= (v, w)$, we call $v$ the \emph{source} of $e$, denoted by $v=s(e)$, and call $w$ the \emph{target} of $e$, denoted by $w=t(e)$.
\end{definition}

For the directed graphs that we consider here, we assume that, for every $v \in V$, we have $(v,v) \in E$.  While we could think of these edges as loops at each vertex, we prefer to regard them as \emph{trivial edges} given by the vertices.  Otherwise, we assume there are no loops or multiple edges, so there is no ambiguity in the definition as we have given it.

\begin{definition}
A \emph{(directed) path} in a directed graph is a sequence of edges $e_1, \ldots, e_n$ such that $t(e_i)=s(e_{i+1})$ for each $1\leq i<n$.  For paths consisting of more than one edge, we require all these edges to be nontrivial.  We call $s(e_1)$ the \emph{initial vertex} of the path and $t(e_k)$ the \emph{terminal vertex} of the path.
\end{definition}

We also equip directed graphs with a labeling by a set $S$, i.e., a specified function $V \rightarrow S$.  Here, $S$ is either the set of natural numbers $\mathbb N$, which we assume to include 0, or $\mathbb N \cup \{-k, -k+1, \ldots, -1\}$ for some $k \geq 1$.

We begin with the definition of action graph from \cite{alv}, using some additional terminology to distinguish it from later examples.

\begin{definition}
For each natural number $n$, the (\emph{0-extended}) \emph{action graph} $A^0_n$ is the directed graph labeled by $\mathbb N$ which is defined inductively as follows.  The action graph $A^0_0$ is defined to be the graph with one vertex labeled by 0 and no nontrivial edges.  Inductively, given $A^0_n$, define $A^0_{n+1}$ by freely adjoining new edges by the following rule.  For any vertex $v$ of $A^0_n$ labeled by $n$, consider all paths in $A^0_n$ with terminal vertex $v$.  For each such path, adjoin a new edge whose source is the initial vertex of that path, and whose target is a new vertex which is labeled by $n+1$.
\end{definition}

Thus, the first few 0-extended action graphs can be depicted as follows:
\[ \xymatrix{A_0: & \bullet_0 && A_1: & \bullet_0 \ar[r] & \bullet_1  \\
A_2: &&& A_3: & \bullet_3 & \bullet_3 \\
& \bullet_2 & \bullet_2 && \bullet_2 \ar[u] & \bullet_2 \ar[u] \\
& \bullet_0 \ar[u] \ar[r] & \bullet_1 \ar[u] & \bullet_3 & \bullet_0 \ar[u] \ar[d] \ar[l] \ar[r] & \bullet_1 \ar[u] \ar[d] \\
&&&& \bullet_3 & \bullet_3.} \]

The first main result of \cite{alv} is the following theorem.

\begin{theorem} \cite[2.4]{alv} \label{catalanthm}
The number of new vertices (and edges) adjoined to $A^0_n$ to obtain $A^0_{n+1}$ is given by the $(n+1)$st Catalan number $C_{n+1}$.
\end{theorem}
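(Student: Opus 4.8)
The plan is to exploit the fact that, although the defining rule refers to \emph{all} directed paths into a vertex, the graphs $A^0_n$ are extremely rigid. First I would show that, after forgetting the trivial edges, each $A^0_n$ is a rooted tree, with root the vertex labeled $0$ and every edge directed from a parent to a child. The point is that a non-root vertex is created together with exactly one incoming edge, while the inductive step only ever adjoins edges \emph{out of} already existing vertices (the initial vertices of paths) and \emph{into} vertices that are brand new (and carry the next label); so no vertex ever gains a second incoming edge, and $A^0_n$ sits inside $A^0_{n+1}$ as a subgraph. In particular every vertex $v$ other than the root has a well-defined parent and a well-defined depth, the depth of $v$ being one more than the depth of its parent.

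Next I would pin down the directed paths. Because every vertex has in-degree at most one, any directed path terminating at a vertex $v$ is forced: its last edge must be the edge into $v$ from $\mathrm{parent}(v)$, its previous edge must be the edge into $\mathrm{parent}(v)$ from $\mathrm{parent}(\mathrm{parent}(v))$, and so on up the branch. Hence the directed paths with terminal vertex $v$ are exactly the trivial path together with one path for each proper ancestor of $v$, so their number is $\mathrm{depth}(v)+1$. By the defining rule this means that the vertices of $A^0_{n+1}$ labeled $n+1$ are in bijection with the pairs $(v,a)$ in which $v$ is labeled $n$ and $a$ is either $v$ or an ancestor of $v$; the vertex produced from such a pair has $a$ as its parent and therefore depth $\mathrm{depth}(a)+1$. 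So the number adjoined at stage $n+1$ is $\sum_{v}(\mathrm{depth}(v)+1)$ over vertices $v$ labeled $n$.

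I would then unwind this into a single combinatorial encoding. Every vertex labeled $i$ is created from a uniquely determined directed path (at each stage the new vertices are in bijection with the paths used to create them), so a vertex $w$ labeled $n$ has a well-defined history: a chain $v_0,v_1,\dots ,v_n=w$ with $v_i$ labeled $i$, together with, for each $i=1,\dots ,n$, the number $y_i\geq 0$ of edges in the path that created $v_i$ from $v_{i-1}$. Since the root has no proper ancestors, $y_1=0$; and the depth bookkeeping above gives $\mathrm{depth}(v_i)=i-(y_1+\cdots +y_i)$, so the only constraints on $(y_2,\dots ,y_n)$ are $y_i\geq 0$ and $y_i\leq\mathrm{depth}(v_{i-1})$, i.e.\ $y_2+\cdots +y_i\leq i-1$ for all $i$. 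Writing $x_r=y_{r+1}$, this realizes the vertices labeled $n$ as exactly the sequences $(x_1,\dots ,x_{n-1})$ of nonnegative integers with $x_1+\cdots +x_r\leq r$ for every $r$, and every such sequence occurs for a unique vertex. It is classical that the number of such sequences of length $m$ is $C_{m+1}$ — for instance, setting $x_{m+1}:=m+1-(x_1+\cdots +x_m)\geq 0$, the word $U D^{x_1} U D^{x_2}\cdots U D^{x_m} U D^{x_{m+1}}$ ranges over the Dyck paths of semilength $m+1$. Taking $m=n-1$, the number of vertices of $A^0_{n+1}$ labeled $n+1$ is $C_{n+1}$; since each new vertex comes with exactly one new edge, this is also the number of new edges, which is the assertion.

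The step I expect to be the main obstacle is the structural claim itself: recognizing that $A^0_n$ is a tree directed away from its root — this is not visible from the defining rule, which quantifies over all paths into a vertex — and, hand in hand with it, the exact count $\mathrm{depth}(v)+1$ of directed paths into $v$. Once those are established, the remainder is bookkeeping. Two small points deserve care: that the trivial path at $v$ is counted among the paths into $v$ (this is the source of the ``$+1$''), and that at each stage distinct paths yield distinct new vertices while every new vertex comes from a path, which is what makes the history encoding a genuine bijection.
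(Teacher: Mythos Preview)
Your argument is correct. The paper itself does not prove this theorem; it is quoted from \cite{alv}, and the paper only recalls from \cite{alv} the bijective approach that it later generalizes: one labels the vertices of each planar rooted tree with $n$ edges and assembles these trees into $A^0_n$, obtaining a bijection between such trees and the vertices labeled $n$, and then invokes the classical fact that planar rooted trees with $n$ edges are counted by $C_n$.

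Your route is genuinely different. You also begin by observing that $A^0_n$ is a rooted tree directed away from the root (this is implicit in the paper's treatment as well), but instead of matching each leaf with a planar rooted tree you record the \emph{history} of a vertex labeled $n$ as a ballot-type sequence $(x_1,\dots,x_{n-1})$ with $x_1+\cdots+x_r\le r$, and then identify these with Dyck paths of semilength $n$. This has the virtue of being entirely self-contained and of making the recursion $\sum_v(\mathrm{depth}(v)+1)$ explicit; the paper's bijection, on the other hand, is tailored so that it generalizes cleanly to $k$-extended action graphs via planar rooted forests with $k+1$ trees, which is the point of the present paper. Your encoding could presumably be adapted to the $k$-extended case as well (allow $y_1\in\{0,\dots,k\}$ rather than forcing $y_1=0$), but the forest picture makes the convolution structure $C^k$ transparent in a way your sequence encoding does not immediately.
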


In this paper, we want to prove an analogous result for more general families of action graphs. Above, we defined a sequence $A^0$ of 0-extended action graphs; we now want to give more general starting graphs which, using the same inductive procedure, produce sequences $A^k$ for any $k \geq 1$. 

\begin{definition}
For any $k \geq 1$, let $A^k_0$ be the directed graph labeled by $\mathbb N \cup \{-k, \ldots, -1\}$ given by
\[ \xymatrix@1{\bullet_{-k} \ar[r] & \bullet_{-k+1} \ar[r] & \cdots \ar[r] & \bullet_{-1} \ar[r] & \bullet_0.} \]  Then for any $n \geq 1$ the directed graph $A^k_n$ is defined using the same inductive rule as before.  We call such an action graph $k$-\emph{extended}.
\end{definition}

The first few 1-extended action graphs can be depicted as follows:
\[ \xymatrix{A_0^1: & \bullet_{-1} \ar[r] & \bullet_0 & A_1^1:& \bullet_1 & \bullet_1 \\
&&&& \bullet_{-1} \ar[u] \ar[r] & \bullet_0 \ar[u] \\
A_2^1: && \bullet_2 & \bullet_2 && \\
&& \bullet_1 \ar[u] & \bullet_1 \ar[u] && \\
& \bullet_2 & \bullet_{-1} \ar[u] \ar[d] \ar[l] \ar[r] & \bullet_0 \ar[u] \ar[d] && \\
&&\bullet_2 & \bullet_2. &&} \]
Observe that the graphs themselves look like the 0-extended action graphs, but the index of each graph is shifted by one and likewise there is a shift in the labeling.

We get more a more interesting family of 2-extended action graphs:
\begin{equation} \label{1-ext}
\xymatrix{A^2_0: & \bullet_{-2} \ar[r] & \bullet_{-1} \ar[r] & \bullet_0 & & \\
A^2_1: & \bullet_1 & \bullet_1 & \bullet_1 && \\
& \bullet_{-2} \ar[r] \ar[u] & \bullet_{-1} \ar[u] \ar[r] & \bullet_0 \ar[u] \\
A^2_2: && \bullet_2 & \bullet_2  & \bullet_2 & \\
&& \bullet_1 \ar[u] & \bullet_1 \ar[u] & \bullet_1 \ar[u] & \\
& \bullet_2 & \bullet_{-2} \ar[r] \ar[u] \ar[l] \ar[dl] \ar[d] & \bullet_{-1} \ar[r] \ar[u] \ar[d] \ar[dr] & \bullet_0 \ar[r] \ar[u] & \bullet_2 \\
& \bullet_2 & \bullet_2 & \bullet_2 & \bullet_2 & .}
\end{equation}

Our goal in this paper is to prove the analogue of Theorem \ref{catalanthm} for $k$-extended action graphs.  Specifically, we would like to identify the sequences given by the number of new vertices adjoined to form each new graph in the sequences $A^k$.

\section{The main theorem} \label{main}

In this section we show that $k$-extended action graphs can be described via the $k$th convolution of the Catalan sequence.  We begin with the precise statement.

\begin{theorem}
	The number of vertices labeled by $n$ in the action graph $A^k_n$ is given by $C^k_n$.
\end{theorem}

Let us first give an outline of the idea of the proof, using the 2-extended action graphs depicted in Figure 2.  Looking at $A^2_0$, observe that if we remove the vertex labeled by -2, we simply recover $A^1_0$.  In particular, if we restrict to this part of the graph, what develops inductively are essentially the 1-extended action graphs.  What is more interesting is what grows out of the vertex $-2$.   For emphasis, let us look at the graph $A^2_1$ with the edge $\bullet_{-2} \rightarrow \bullet_{-1}$ removed:
\[ \xymatrix{\bullet_1 & \bullet_1 & \bullet_1  \\
\bullet_{-2} \ar[u] & \bullet_{-1} \ar[u] \ar[r] & \bullet_0. \ar[u] } \]
We see the copy of $A^1_1$ to the right, and in subsequent steps this part of the graph develops exactly as for the 1-extended action graphs.  To the left, at this stage we have something that looks a lot like $A^0_1$, but with the initial vertex labeled by $-2$ rather than by 0.  We expect this part of the graph to develop as the original action graphs do, i.e., producing $C_n$ vertices labeled by $n$ at step $n$.  However, when we build $A^2_2$, we still have to take into account the paths that start at the vertex labeled by $-2$ and end at vertices labeled by 1 in the right-hand side.  Since the vertex labeled by $-2$ is initial in any 2-extended action graph, we produce new vertices and edges in this way at every step.  To illustrate this phenomenon, consider the following decomposition of the graph $A^2_2$, observing that we have repeated the vertex labeled by $-2$ to emphasize the three different portions of the graph that grow out of it:
\[ \xymatrix{& \bullet_2 & \bullet_2  & \bullet_2 & \\
& \bullet_1 \ar[u] & \bullet_1 \ar[u] & \bullet_1 \ar[u] & \\
\bullet_2 & \bullet_{-2} \ar[u] \ar[l] & \bullet_{-1} \ar[r] \ar[u] \ar[d] \ar[dr] & \bullet_0 \ar[r] \ar[u] & \bullet_2 \\
\bullet_{-2} \ar[d] & \bullet_{-2} \ar[d] & \bullet_2 & \bullet_2 & \\
\bullet_2 & \bullet_2 &&& . }\]
The two single-edge graphs in the lower-left corner now develop as the ordinary actions graphs, just one stage behind.  The idea behind our proof is to iterate this phenomenon in the general case.

\begin{proof}
	Let $a^k_n$ denote the number of vertices labeled by $n$ in the $k$-extended action graph; we thus want to prove that $a^k_n = C^k_n$.  We prove this result by induction on $k$.  The $k=0$ case is precisely Theorem \ref{catalanthm}.
	
	Assuming by our inductive hypothesis that $a^{k-1}_j = C^{k-1}_j$ for all $j$, it suffices to show that 
	\[ a^k_n = \sum_{i=0}^na_i^{k-1} C_{n-i}. \]
	Observe that the right-hand side of this equation is equal to 
	\[ a_n^{k-1} + \sum_{i=0}^{n-1} a_i^{k-1}C_{n-i}, \]
	so we prove that we can obtain $a_n^k$ as such a sum.
	
	As in the $k=2$ example given above, let us divide the graph into two parts: the right-hand side, arising only from the vertices labeled by $-k+1, \ldots, 0$, and the left-hand side, which develops from the vertex labeled by $-k$.  We know by induction that on the right-hand side at step $n$ we get $a_n^{k-1}$ new vertices labeled by $n$.  So, it remains to show that the left-hand side produces new vertices enumerated by the rest of this sum.  
	
	We claim that this part of the sum is obtained from subgraphs that originate from paths starting at the vertex labeled by $-k$ and ending at vertices on the right-hand side of the graph.  To start, we produce a new edge $\bullet_{-k} \rightarrow \bullet_1$ corresponding to the path starting at the vertex labeled by $-k$ and ending at the vertex labeled by 0, which is in the right-hand side of the graph.  There is only one such path, which we can think of as being counted by $a^{k-1}_0$.  From this new edge, the graph develops as for ordinary action graphs, so at step $n$ contributes $C_n$ new vertices.  We similarly produce edges $\bullet_{-k} \rightarrow \bullet_2$ corresponding to paths starting at the vertex labeled by $-k$ and ending at vertices labeled by 1 on the right-hand side. There are $a^{k-1}_1$ such vertices, by our inductive hypothesis, and in subsequent steps these new subtrees grow as ordinary action graphs do, but they originate one step later.  Thus, at step $n$, each of these $a^{k-1}_1$ subtrees gains $C_{n-1}$ new vertices.  More generally, at step $i$ we obtain edges $\bullet_{-k} \rightarrow \bullet_i$ corresponding to paths starting at the vertex labeled by $-k$ and ending at vertices labeled by $i-1$ on the right-hand side, of which there are $a^{k-1}_i$.  At step $n$, the subgraph growing out of these vertices gains $C_{n-i}$ new vertices.    Thus, the total number of vertices labeled by $n$ obtained in this fashion is given by the indicated sum.
\end{proof}

\section{Planar rooted forests} \label{prf}

In the earlier paper \cite{alv}, the authors give a direct proof of Theorem \ref{catalanthm}, but they also show that the new vertices in each $A^0_n$ are in bijection with planar rooted trees with $n$ edges.  The latter set is one of the standard ways to obtain the $n$-th Catalan number $C_n$.  

As we would like an analogue for $k$-extended action graphs, we generalize planar rooted trees to planar rooted forests consisting of $k+1$-trees.  In this section we show that the family of such forests also gives the sequence $C^k$.  Let us begin with definitions.

\begin{definition}
A \emph{tree} is a connected graph with no loops or multiple edges.  A \emph{rooted tree} is a tree with a specified vertex called the \emph{root}.
\end{definition}

\begin{definition}
A \emph{leaf} of a rooted tree is a vertex of valence 1 which is not the root.  In the special case of a tree consisting of a single vertex with no edges, we consider this vertex to be both the root and a leaf.
\end{definition}

\begin{remark}
Applying this terminology to action graphs, note that they can be regarded as (directed) trees, and that the vertices of $A^k_{n+1}$ which are not in $A^k_n$ are precisely the leaves of $A^k_{n+1}$.
\end{remark}

Here we consider \emph{planar} rooted trees.  In particular, if the bottom vertex is the root, we regard the following two trees as different:
\[ \xymatrix{\bullet &&&&& \bullet \\
\bullet \ar@{-}[u] && \bullet & \bullet && \bullet \ar@{-}[u] \\
& \bullet \ar@{-}[ul] \ar@{-}[ur] &&& \bullet \ar@{-}[ul] \ar@{-}[ur] & .} \]

\begin{theorem} \label{prt and catalan} \cite[\S 1.5]{stanleycat}
 The set of all planar rooted trees with $n$ edges contains $C_n$ elements. 
\end{theorem}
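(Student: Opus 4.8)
The plan is to establish a bijection between planar rooted trees with $n$ edges and one of the standard Catalan objects, and the cleanest choice here is the set of Dyck paths of length $2n$ (or, equivalently, balanced sequences of $n$ pairs of matched parentheses), since the depth-first traversal of a planar rooted tree translates directly into such a sequence. First I would fix, for each planar rooted tree, the canonical \emph{depth-first} (preorder) walk around the tree: starting at the root, repeatedly descend into the leftmost not-yet-traversed child edge, and when no child remains, ascend along the parent edge. This walk crosses each of the $n$ edges exactly twice, once downward and once upward. Recording a symbol ``$+1$'' (or an open parenthesis) each time an edge is traversed downward and a symbol ``$-1$'' (or a close parenthesis) each time it is traversed upward produces a word of length $2n$ whose partial sums are always nonnegative and whose total sum is $0$ — that is, a Dyck path of semilength $n$.

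The key steps, in order, are: (1) verify the map is well defined, i.e., that the planarity (the left-to-right ordering of children at each vertex) makes the depth-first walk, and hence the output word, unambiguous; (2) verify the output is genuinely a Dyck path, by observing that at any point in the walk the running partial sum equals the current depth in the tree, which is always $\geq 0$ and returns to $0$ only at the end; (3) construct the inverse map, reading a Dyck path left to right and building the tree by creating a new child (moving down) on each $+1$ and returning to the parent on each $-1$, with the left-to-right order of siblings determined by the order in which their down-steps occur; and (4) check that these two constructions are mutually inverse, which is a routine induction on $n$ (or on the length of the word, peeling off the first return to $0$). Then invoke the classical fact that Dyck paths of semilength $n$ are counted by $C_n$ — or, since the paper has already recorded the recursion $C_{n+1} = \sum_{i=0}^n C_i C_{n-i}$, observe that removing the root edge(s) decomposes a planar rooted tree with $n+1$ edges as an ordered pair consisting of the subtree hanging from the first child and the remaining forest, giving exactly this recursion together with $C_0 = 1$ for the one-vertex tree.

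The main obstacle is not any single deep step but rather the bookkeeping of planarity: one must be careful that the left-to-right order of subtrees at each internal vertex is faithfully encoded and decoded, so that the two trees drawn in the displayed example above are indeed assigned different Dyck words. A clean way to sidestep repeated ad hoc arguments is to set up the bijection recursively from the start: define $\Phi$ of the one-vertex tree to be the empty word, and define $\Phi$ of a tree whose root has children $T_1, \dots, T_r$ (in left-to-right order) to be the concatenation $(\,\Phi(T_1)\,)(\,\Phi(T_2)\,)\cdots(\,\Phi(T_r)\,)$; then induction handles well-definedness, the Dyck property, and invertibility simultaneously, and the recursion on $\Phi$ mirrors the Catalan recursion directly. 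I would therefore present the recursive version as the main argument and mention the depth-first walk only as the intuition behind it.
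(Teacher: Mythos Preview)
Your argument is a correct and standard proof of this classical fact, but there is nothing in the paper to compare it to: the paper does not prove Theorem~\ref{prt and catalan} at all. It is stated with a citation to Stanley's \emph{Catalan Numbers} \cite[\S 1.5]{stanleycat} and used as a black box. So your depth-first-traversal bijection with Dyck paths (or the equivalent recursive encoding $\Phi$) is not an alternative to the paper's proof but a proof where the paper simply invokes the literature.

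One small remark on your write-up: you speak of descending to the ``leftmost'' child, whereas the paper's labeling convention (introduced in Section~\ref{comp}) works from right to left. This is immaterial for the enumeration, since reflecting every tree is a bijection on planar rooted trees with $n$ edges, but if you wanted your encoding to be compatible with the paper's vertex labels you would traverse children right to left instead.
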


Here, we want to consider the set of forests of rooted planar trees with $k$ trees containing $n$ total edges. We seek to establish that this set contains $C_n^{k}$ elements.

\begin{definition}
A \emph{planar rooted forest} is a set of planar rooted trees.  We assume that the trees in a planar rooted forest are given a specified ordering. 
\end{definition}

In particular, the ordering distinguishes between the forests
\[ \xymatrix{&&& \bullet \ar@{-}[d] && \bullet \ar@{-}[d] &&& \\
\bullet \ar@{-}[dr] && \bullet \ar@{-}[dl] & \bullet \ar@{-}[d] & \text{and} & \bullet \ar@{-}[d] & \bullet \ar@{-}[dr] && \bullet \ar@{-}[dl] \\
& \bullet && \bullet && \bullet && \bullet &.} \]

Now, we want to show that we can obtain self-convolutions of the Catalan sequence from such forests.  To do so, we use the following formula for the $n$th term of the $k$th self-convolution of the Catalan number.

\begin{prop}  \cite[\S 2]{ted} \label{formula}
The $n$th term of the sequence $C^k$ is given by the following formula:
\[  C_n^k=\sum \limits _{\substack {0\leq i_1, i_2, i_3, ..., i_k, i_{k+1}\leq n \\ \sum_{j=1}^{k+1} i_j=n}} C_{i_1} C_{i_2} C_{i_3} ... C_{i_k} C_{i_{k+1}}. \]
\end{prop}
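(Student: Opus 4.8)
The plan is to argue by induction on $k$, using the associativity of convolution to peel off one factor at a time. For the base case $k=0$, the sequence $C^0$ is by definition just $C$ itself, and the sum on the right-hand side collapses to the single term with $i_1 = n$, namely $C_{i_1} = C_n$; so the two sides agree. (Note also that the constraint $i_j \leq n$ is automatic once the $i_j$ are nonnegative integers summing to $n$, so we may as well drop it from the indexing.)

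For the inductive step, suppose the formula holds for $k-1$, that is,
\[ C_m^{k-1} = \sum_{\substack{i_1 + \cdots + i_k = m \\ i_j \geq 0}} C_{i_1} \cdots C_{i_k} \]
for every $m \geq 0$. Since convolution is associative, $C^k = C^{k-1} \ast C$, and hence by the definition of convolution
\[ C_n^k = \sum_{m=0}^n C_m^{k-1}\, C_{n-m}. \]
Substituting the inductive hypothesis for $C_m^{k-1}$ and setting $i_{k+1} := n - m$, the resulting double sum becomes a single sum over all $(k+1)$-tuples $(i_1, \ldots, i_{k+1})$ of nonnegative integers with $i_1 + \cdots + i_{k+1} = n$, with summand $C_{i_1} \cdots C_{i_{k+1}}$, which is exactly the claimed expression for $C_n^k$.

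The only thing to check is the bookkeeping in the substitution: as $m$ ranges over $0, \ldots, n$ and, for each $m$, the tuple $(i_1, \ldots, i_k)$ ranges over the compositions of $m$ into $k$ nonnegative parts, the data $(i_1, \ldots, i_k, i_{k+1})$ with $i_{k+1} = n-m$ enumerates each composition of $n$ into $k+1$ nonnegative parts exactly once. This is immediate, so there is no real obstacle here; the proposition is in essence a restatement of the associativity and commutativity of convolution, written out explicitly, and the induction simply makes that bookkeeping precise.
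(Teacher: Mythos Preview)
Your proof is correct. Note, however, that the paper does not supply its own proof of this proposition: it is stated with a citation to Tedford and then used as a black box, so there is no argument in the paper to compare against. Your induction on $k$ is the natural way to see the formula, and indeed, given the paper's Definition~1.1 of $A^k$ as the $(k+1)$-fold convolution $A \ast \cdots \ast A$, the proposition is essentially just the explicit expansion of that iterated convolution; your argument makes this precise with clean bookkeeping, and no step is in doubt.
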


We apply this formula to prove our first main result, which is the following.

\begin{theorem} \label{forestconv}
The set of all planar rooted forests with $k+1$ trees and $n$ edges contains exactly $C_n^k$ elements.
\end{theorem}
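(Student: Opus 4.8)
The plan is to decompose a planar rooted forest with $k+1$ trees according to the number of edges carried by each of its constituent trees, and then to apply the product principle together with the formula of Proposition \ref{formula}. First I would unwind the definitions: a planar rooted forest with $k+1$ trees and $n$ total edges is precisely an ordered $(k+1)$-tuple $(T_1,\ldots,T_{k+1})$ of planar rooted trees such that the edge counts $i_j = |E(T_j)|$ satisfy $i_j \geq 0$ for every $j$ and $\sum_{j=1}^{k+1} i_j = n$. Here the degenerate single-vertex tree, which has $i_j = 0$, is permitted as one of the $T_j$, so the parts $i_j$ range over all nonnegative integers with the stated sum.

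Next, for a fixed choice of $(i_1,\ldots,i_{k+1})$ with $\sum_j i_j = n$ and each $i_j \geq 0$, the number of forests realizing these edge counts is, by the product principle, the product of the numbers of planar rooted trees on $i_j$ edges, which by Theorem \ref{prt and catalan} equals $C_{i_1}C_{i_2}\cdots C_{i_{k+1}}$; the convention $C_0 = 1$ is exactly what is needed to count the single-vertex tree. Summing over all such $(i_1,\ldots,i_{k+1})$ then gives
\[ \sum_{\substack{0 \leq i_1,\ldots,i_{k+1} \leq n \\ \sum_{j=1}^{k+1} i_j = n}} C_{i_1}C_{i_2}\cdots C_{i_{k+1}}, \]
which is precisely $C_n^k$ by Proposition \ref{formula}. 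This completes the argument.

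Since each step is a routine counting or bijective observation, I do not expect a serious obstacle; the only points demanding care are (i) checking that the convention for trees with zero edges lines up with the range $0 \leq i_j \leq n$ in the summation, and (ii) keeping the indexing straight, so that a forest of $k+1$ trees corresponds to the $k$th self-convolution $C^k$ rather than $C^{k+1}$. As an alternative that sidesteps citing Proposition \ref{formula}, one could instead induct on $k$: split a forest of $k+1$ trees as its first tree, contributing a factor $C_i$ when that tree has $i$ edges, followed by a forest of $k$ trees with $n-i$ edges, contributing $C^{k-1}_{n-i}$ by the inductive hypothesis; summing over $i$ reproduces the convolution recursion $C^k_n = \sum_{i=0}^n C_i\, C^{k-1}_{n-i}$ directly, with the base case $k=0$ being Theorem \ref{prt and catalan} itself.
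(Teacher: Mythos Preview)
Your proposal is correct and follows essentially the same route as the paper: decompose a forest by the edge-count vector $(i_1,\ldots,i_{k+1})$, invoke the Catalan count for each tree, multiply, and sum to recover the formula of Proposition~\ref{formula}. The alternative induction on $k$ you sketch at the end is a nice extra not present in the paper, but your primary argument is identical in structure to the paper's.
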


\begin{proof}
Consider a planar rooted forest consisting of $k+1$ trees and $n$ total edges.  For any $1 \leq j \leq k+1$, let $i_j$ denote the number of edges in the $j$th tree of this forest.  Observe that $0 \leq i_j \leq n$ and 
\[ \sum_{j=1}^{k+1} i_j = n. \]
Given any $i_j$, we know from Theorem \ref{catalanthm} that there are $C_{i_j}$ possible trees with $i_j$ edges.  Thus, given any sequence $i_1, \ldots, i_{k+1}$, there are $C_{i_1} \cdots C_{i_{k+1}}$ forests with edges distributed among the trees via the sequence.  Ranging over all such sequences, we obtain precisely the formula in Proposition \ref{formula}.
\end{proof}

\section{Comparison of planar rooted forests with action graphs} \label{comp}

In this last section, we give a more precise relationship between our two structures of interest by establishing a bijection between the number of new edges added to obtain $A^k_n$ from $A^k_{n-1}$ and the number of planar rooted forests with $k+1$ trees and $n$ edges.  

In the comparison between planar rooted trees and (0-extended) action graphs \cite{alv}, the main idea is that the trees can be assembled in such a way as to obtain the corresponding action graph.  We review the idea of this construction, so as to be able to generalize it.

We label the vertices of a planar rooted tree via the following rule.  The root vertex is always given the label 0.  Given a representative of the tree with the root at the bottom, label the vertices by successive natural numbers, moving upward from the root and from right to left.  For example, we have the labelings
\[ \xymatrix{\bullet_3 &&&&& \bullet_2 \\
\bullet_2 \ar@{-}[u] && \bullet_1 & \bullet_3 && \bullet_1 \ar@{-}[u] \\
& \bullet_0 \ar@{-}[ul] \ar@{-}[ur] &&& \bullet_0 \ar@{-}[ul] \ar@{-}[ur] & .} \]
(Note that this convention differs slightly from the one used in previous work \cite{alv}.)  Because these labels are unique, they give a planar rooted tree a canonical directed structure, given by the usual ordering on the natural numbers.

We introduce some additional terminology.

\begin{definition}
A \emph{branch} of a rooted tree is a path from either the root or from a vertex of valence greater than 2 to a leaf.   
%The \emph{length} of the branch is the length of the path, namely the number of edges it contains.
\end{definition}

Now we can describe the comparison with action graphs.  

We can use these labelings to assemble planar rooted trees into the appropriate action graph in the manner described in the previous paper \cite{alv}; we illustrate the $n=2$ case here.  The two planar rooted trees with two edges are given by
\[ \xymatrix{\bullet_2  &&& \\
\bullet_1 \ar@{-->}[u] & \bullet_2 && \bullet_1 \\
\bullet_0 \ar@{-->}[u] && \bullet_0 \ar@{..>}[ul] \ar@{..>}[ur] & } \]
and can thus be assembled to form the action graph $A_2$, as given by
\[ \xymatrix{\bullet_2 & \bullet_2 \\
\bullet_0 \ar@{..>}[u] \ar@{-->}[r]<.5ex> \ar@{..>}[r]<-.5ex> & \bullet_1. \ar@{-->}[u]} \]

The precise statement of the comparison is as follows.

\begin{theorem} \cite[3.3]{alv}
The function assigning any planar rooted tree with $n$ edges to the leaf that it contributes to the action graph $A^0_n$ defines a bijection.
\end{theorem}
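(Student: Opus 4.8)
The plan is to argue by induction on $n$, using an inductive hypothesis strong enough to carry its own justification. Write $\Phi_n$ for the function in the theorem. Theorems \ref{prt and catalan} and \ref{catalanthm} already give us that the domain and the codomain of $\Phi_n$ each have exactly $C_n$ elements, so it suffices to prove injectivity --- and in fact I would build a two-sided inverse.

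First I would record two consequences of the canonical labeling being a depth-first traversal that descends from the root and processes siblings right to left. For a planar rooted tree $T$ with $n\ge 1$ edges: (i) every edge of $T$ is directed from its smaller-labeled endpoint to its larger-labeled one, so $T$ is an out-tree rooted at $0$; and (ii) the vertex labeled $n$ is the leaf reached by repeatedly following the leftmost child from the root, while the vertex labeled $n-1$ is the parent $q$ of the vertex labeled $n$, or else a descendant of $q$. Hence there is a unique directed path $P_T$ in $T$ from $q$ to the vertex labeled $n-1$, and deleting the leaf labeled $n$ leaves a planar rooted tree $T'$ with $n-1$ edges whose canonical labeling is the restriction of $T$'s. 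The action graphs behave the same way: each $A^0_n$ is an out-tree rooted at $0$, in which a directed path from one vertex to another exists precisely when the first is an ancestor of the second, and all vertices labeled $n$ are leaves.

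The statement I would actually induct on is: for every $n$, the out-tree $A^0_n$ contains a canonical label-preserving embedding, onto a rooted subtree, of each planar rooted tree with at most $n$ edges; these embeddings are compatible with $A^0_{n-1}\subseteq A^0_n$; and the leaves of $A^0_n$ are exactly the images, one apiece, of the top-labeled vertices of the trees with precisely $n$ edges. Granting this at stage $n-1$, the forward map goes as follows: given $T$ with $n$ edges, embed $T'$ into $A^0_{n-1}$; then the image of $P_T$ is a directed path of $A^0_{n-1}$ ending at a vertex labeled $n-1$, so the free-adjunction rule creates a new vertex of $A^0_n$, labeled $n$, joined to the image of $q$, and this choice of $\Phi_n(T)$ extends the embedding of $T'$ to one of $T$. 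For the inverse, a new vertex $v$ of $A^0_n$ has a unique creating path (distinct creating paths give distinct new vertices, since the adjunction is free), say from $u$ to $w$ with $w$ a leaf of $A^0_{n-1}$ labeled $n-1$; by the inductive hypothesis $w$ is the image of the top vertex of a unique tree $T_w$ with $n-1$ edges, and $u$, being on the root-to-$w$ path of $A^0_{n-1}$, is the image of a vertex $\tilde u$ on the leftmost-descending path of $T_w$; adjoining a new leftmost child to $\tilde u$ yields a tree with $n$ edges. A short bookkeeping check --- tracking what deletion and re-attachment of the leftmost leaf do to the depth-first order --- shows the two maps are mutually inverse, and the base cases $n=0$ and $n=1$ are immediate.

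The step I expect to be the real obstacle is making the ``canonical embedding'' precise and justifying it: one must show that a label-preserving rooted-tree embedding into $A^0_n$ is unique (so that $\tilde u$, and the location of any subtree, is unambiguous), and --- this is the crux --- that the planar right-to-left ordering of the branches at a vertex of a tree is recorded faithfully by the creating path, so that two trees differing only by a permutation of the branches hanging off a common vertex really do produce different leaves. Concretely it amounts to proving that $(T',P_T)$ is a complete invariant of $T$ and that every admissible pair occurs; once that is settled, the remaining planar-structure bookkeeping is routine and also absorbs the ``no ambiguity'' remark about the order of assembly made in the text.
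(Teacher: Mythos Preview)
The paper does not itself prove this theorem; it is quoted from \cite{alv}, and only the assembly construction is reviewed in the paragraphs preceding the statement.  The nearest thing to a proof in the present paper is the argument for the generalization, Theorem~\ref{forestaction}, and your proposal follows essentially that same route: induct on $n$, strip off the vertex labeled $n$ to obtain a tree $T'$ with $n-1$ edges whose top vertex is now the old $n-1$, and identify the datum needed to rebuild $T$ with a directed path in $A^0_{n-1}$ terminating at a label-$(n-1)$ vertex, which is exactly the ``creating path'' in the inductive construction of $A^0_n$.

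Where your write-up differs is in the level of detail rather than the strategy.  The paper's proof of Theorem~\ref{forestaction} asserts the key step---that forests with $n+1$ edges are in bijection with paths in $A^k_n$ ending at label-$n$ vertices---and dispatches it by describing a compatible ordering on both sides; you instead make the forward and inverse maps explicit, isolate the pair $(T',P_T)$ (equivalently $(T',q)$ with $q$ on the leftmost-descending spine of $T'$) as the complete invariant, and correctly flag that the one genuinely delicate point is checking that the planar right-to-left branch order is faithfully encoded by this data.  Your observations (i) and (ii) about the depth-first labeling are correct, and the claim that deleting the top leaf restricts the canonical labeling is exactly what makes the induction go through.  So the proposal is sound and aligned with the paper's method; the extra care you take on uniqueness of the embedding and on recovering the planar structure is precisely the content that the paper's sketch leaves to the reader.
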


Here, we want to establish a bijection between $k$-extended action graphs and rooted planar forests with $k+1$ trees.  For convenience for this comparison, in this section we assume that a planar rooted forest with $k+1$ trees has roots labeled, from left to right, by $-k, -k+1, \ldots, -1, 0$.  Doing so faciliates the comparison with the vertices with the same labels in the $k$-extended action graphs.  From there, we follow the same labeling convention as before, working from bottom to top and right to left.   An example of such a labeling is the following:
\[ \xymatrix{&&& \bullet_5 \ar@{-}[d] &  && \bullet_2 \ar@{-}[d] \\
\bullet_7 \ar@{-}[dr] && \bullet_6 \ar@{-}[dl] & \bullet_4 \ar@{-}[d] & \bullet_3 \ar@{-}[dr] && \bullet_1 \ar@{-}[dl] \\
& \bullet_{-2} && \bullet_{-1} && \bullet_0 &.} \]

We claim that we can relate these forests to more general action graphs using the same kind of assembly process as before.  To illustrate the process, let us look at the first few 2-extended action graphs.  Our initial graph
\[ \xymatrix@1{\bullet_{-2} \ar[r] & \bullet_{-1} \ar[r] & \bullet_0} \]
has one leaf, the vertex labeled by 0, which corresponds to the single planar rooted forest with 3 trees and no edges:
\[ \xymatrix@1{\bullet_{-2} & \bullet_{-1} & \bullet_0.} \]

At the next stage, observe that there are three planar rooted forests with 3 trees and 1 edge:
\[ \xymatrix{ && \bullet_1 &&& \bullet_1 &&& \bullet_1 && \\
\bullet_{-2} & \bullet_{-1} & \bullet_0 \ar@{-}[u] && \bullet_{-2} & \bullet_{-1} \ar@{-}[u] & \bullet_0 && \bullet_{-2} \ar@{-}[u] & \bullet_{-1} & \bullet_0} \]
which precisely assemble to give the leaves, labeled by 1, in the action graph $A^2_1$:
\[ \xymatrix{\bullet_1 & \bullet_1 & \bullet_1 & \\
\bullet_{-2} \ar[r] \ar[u] & \bullet_{-1} \ar[u] \ar[r] & \bullet_0. \ar[u]} \]

We consider one more stage, at which things get more interesting.  We have 9 planar rooted forests with 3 trees and 2 edges; the order in which we have listed them here suggests a canonical way for obtaining them from those with only one edge without repetition:
\[ \xymatrix{ && \bullet_2 \ar@{--}[d] &&&&&&&& \\
&& \bullet_1 \ar@{-}[d] &&& \bullet_2 \ar@{--}[dr] & \bullet_1 \ar@{-}[d] &&& \bullet_2\ar@{--}[d] & \bullet_1 \ar@{-}[d] \\
\bullet_{-2} & \bullet_{-1} & \bullet_0 && \bullet_{-2} & \bullet_{-1} & \bullet_0 && \bullet_{-2} & \bullet_{-1} & \bullet_0} \]

\[ \xymatrix{ &&&&& \bullet_2 \ar@{--}[d] &&&&& \\
\bullet_2 \ar@{--}[d] && \bullet_1 \ar@{-}[d] &&& \bullet_1 \ar@{-}[d] & && \bullet_2\ar@{--}[dr] & \bullet_1 \ar@{-}[d] &  \\
\bullet_{-2} & \bullet_{-1} & \bullet_0 && \bullet_{-2} & \bullet_{-1} & \bullet_0 && \bullet_{-2} & \bullet_{-1} & \bullet_0} \]

\[ \xymatrix{ &&&&  \bullet_2 \ar@{--}[d] &&&&&& \\
\bullet_2 \ar@{--}[d] & \bullet_1 \ar@{-}[d] &&& \bullet_1 \ar@{-}[d] &&&  \bullet_2\ar@{--}[dr] & \bullet_1 \ar@{-}[d] &&  \\
\bullet_{-2} & \bullet_{-1} & \bullet_0 && \bullet_{-2} & \bullet_{-1} & \bullet_0 && \bullet_{-2} & \bullet_{-1} & \bullet_0.} \]
We invite the reader to check that these dotted edges assemble to the dotted edges in $A^2_2$:
\[ \xymatrix{& \bullet_2 & \bullet_2  & \bullet_2 & \\
& \bullet_1 \ar@{-->}[u] & \bullet_1 \ar@{-->}[u] & \bullet_1 \ar@{-->}[u] & \\
\bullet_2 & \bullet_{-2} \ar[r] \ar[u] \ar@{-->}[l] \ar@{-->}[dl] \ar@{-->}[d] & \bullet_{-1} \ar[r] \ar[u] \ar@{-->}[d] \ar@{-->}[dr] & \bullet_0 \ar@{-->}[r] \ar[u] & \bullet_2 \\
\bullet_2 & \bullet_2 & \bullet_2 & \bullet_2 & .}\]
We now turn to the proof that this process always works.

\begin{theorem} \label{forestaction}
	The above procedure defines a bijection between the set of leaves in $A^k_n$ and the set of rooted planar forests with $k+1$ trees and $n$ total edges.
\end{theorem}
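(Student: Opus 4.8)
The plan is to prove Theorem~\ref{forestaction}, i.e.\ to construct a bijection, by induction on $n$, following the assembly procedure illustrated above and generalizing the $k=0$ argument of \cite[3.3]{alv}, while carrying along a depth statistic to absorb the effect of the extra trees. Two facts about action graphs, both immediate from the inductive definition, will be used throughout: $A^k_n$ is a tree rooted at $\bullet_{-k}$ in which every non-root vertex has exactly one incoming nontrivial edge, and the leaves of $A^k_n$ are exactly the vertices labeled $n$. Since a directed path with terminal vertex a leaf $\ell$ is nothing but the unique path to $\ell$ from one of its ancestors (with $\ell$ itself counted as an ancestor), the inductive construction of $A^k_n$ from $A^k_{n-1}$ shows, for $n\geq 1$, that the leaves of $A^k_n$ are in bijection with pairs $(\ell',v^{*})$ where $\ell'$ is a leaf of $A^k_{n-1}$ and $v^{*}$ is an ancestor of $\ell'$; moreover the leaf matched to $(\ell',v^{*})$, namely the one adjoined from the path $v^{*}\to\ell'$, has depth $d_{A^k_{n-1}}(v^{*})+1$ in $A^k_n$.

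The forest side has a parallel recursion. In a forest $F$ with $k+1$ trees and $n\geq 1$ edges, the vertex labeled $n$ is the leaf reached from the root of the leftmost edge-bearing tree by repeatedly passing to the leftmost child, so it is a leaf; deleting it produces a forest $F'$ with $k+1$ trees and $n-1$ edges on which all surviving labels are unchanged. The key combinatorial point, which I would establish by a short case analysis according to whether the leftmost edge-bearing tree of $F$ remains edge-bearing in $F'$, is that $F$ is recovered from $F'$ by adjoining a new leftmost child exactly at a vertex $v$ of $F'$ that is an ancestor, in $\widehat{F'}$, of the vertex labeled $n-1$ --- here $\widehat{F'}$ denotes $F'$ with extra edges inserted to join consecutive roots into a single rooted tree, so that the ancestors of the vertex labeled $n-1$ form a chain from the root of the leftmost tree up to that vertex, passing through the roots of all the edgeless trees on the left. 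Thus forests with $k+1$ trees and $n$ edges are in bijection with pairs $(F',v)$, with $F'$ a forest with $k+1$ trees and $n-1$ edges and $v$ a $\widehat{F'}$-ancestor of its vertex labeled $n-1$.

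The inductive hypothesis to maintain is that there is a bijection $\Phi_m$ from forests with $k+1$ trees and $m$ edges to leaves of $A^k_m$ satisfying $d_{A^k_m}(\Phi_m(F))=d_{\widehat F}(\text{vertex }m)$. Granting this for $m=n-1$ and writing $\ell'=\Phi_{n-1}(F')$, the chain of $\widehat{F'}$-ancestors of the vertex labeled $n-1$ and the chain of ancestors of $\ell'$ in $A^k_{n-1}$ have the same length, namely $d_{\widehat{F'}}(\text{vertex }n-1)=d_{A^k_{n-1}}(\ell')$, so there is a unique depth-preserving bijection between them; composing the three bijections
\[
\{\text{forests, }n\text{ edges}\}\longleftrightarrow\{(F',v)\}\longleftrightarrow\{(\ell',v^{*})\}\longleftrightarrow\{\text{leaves of }A^k_n\}
\]
then yields a bijection $\Phi_n$, and the required depth identity for $\Phi_n$ follows from $d_{A^k_n}(\Phi_n(F))=d_{A^k_{n-1}}(v^{*})+1=d_{\widehat{F'}}(v)+1=d_{\widehat F}(\text{vertex }n)$. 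The base case $n=0$ pairs the unique edgeless forest with the leaf $\bullet_0$ of the path $A^k_0$, both of depth $k$, so the induction closes.

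The step I expect to demand the most care is the combinatorial heart of the second paragraph: pinning down exactly which vertices of $F'$ can be given a new leftmost child that inherits the label $n$, and identifying this set with the chain of $\widehat{F'}$-ancestors of the vertex labeled $n-1$. This is precisely where the roots $\bullet_{-k},\dots,\bullet_{-1}$ enter the picture --- such a vertex can be the root of a tree that is still edgeless and sits to the left of all the activity, which mirrors on the forest side the fact that a path in $A^k_{n-1}$ ending at a leaf may begin at one of the negative-labeled vertices. Once this lemma and the depth identity are in place, the remaining verifications --- that the composite $\Phi_n$ is a bijection and that it agrees with the concrete assembly process depicted above for $A^2_2$ --- are routine.
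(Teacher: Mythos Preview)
Your argument is correct and follows the same inductive strategy as the paper: both proofs decompose the leaves of $A^k_n$ into pairs (leaf of $A^k_{n-1}$, ancestor) and the forests with $n$ edges into pairs (forest with $n-1$ edges, insertion point), then match the two decompositions. The paper's proof asserts the key step---that the admissible insertion points in a forest $F'$ are in bijection with the directed paths in $A^k_n$ ending at the corresponding leaf---with little more than ``observe that''; your auxiliary tree $\widehat{F'}$ (linking the roots so that the empty trees on the left become ancestors) together with the depth invariant $d_{A^k_m}(\Phi_m(F))=d_{\widehat F}(\text{vertex }m)$ is exactly what is needed to make that assertion rigorous. So your proof is essentially the paper's argument, carried out with more care.
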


\begin{proof}
We assume $k$ is fixed and use induction on $n$.  When $n = 0$, we have the graph $A^k_0$, which is just
\[ \xymatrix@1{\bullet_{-k}\ar[r]  & \bullet_{-k+1} \ar[r] & \cdots \ar[r] & \bullet_1 \ar[r] & \bullet_0.} \]  
This graph has one leaf, at the vertex labeled by 0.  Correspondingly, there is exactly one planar rooted forest with $k+1$ trees and no edges, namely,
\[ \xymatrix@1{\bullet_{-k} & \bullet_{-k+1} & \cdots & \bullet_1 & \bullet_0.} \]  
Thus, we have the desired bijection when $n=0$.

Suppose the bijection holds for $n \geq 0$.   Consider the set of planar rooted forests with $n + 1$ edges and $k+1$ trees and the action graph $A^k_{n+1}$.  Assume any such forest is given its canonical labeling, as described above.  By the inductive hypothesis, the vertices of the forest labeled by $-k, \ldots, 0, 1, \ldots, n$ correspond to edges of the graph $A^k_n$.   Consider the vertex labeled by $n+1$ and the unique branch from the appropriate root vertex to it.  All but the last edge of this branch can be identified with a path in $A^k_n$; adjoin a new edge with final vertex labeled by $n+1$ to complete a path corresponding to the whole branch. 

To complete the proof, then, it suffices to show that for $n \geq 0$ the number of planar rooted forests with $k+1$ trees and $n+1$ edges corresponds exactly to the number of directed paths in $A^k_n$ ending at vertices labeled by $n$.  As illustrated by the examples above, we put the following ordering on the set of planar rooted forests with $n$ edges.  When $n=0$, order the $k+1$ forests by the root at which the edge is placed, starting with the one labeled by 0 and moving left.  Observe that these trees correspond exactly to directed paths to the vertex labeled by 0 in $A^k_0$.

For $n\geq 1$, start with the first forest in the inductively-defined ordering on planar rooted forests with $n$ edges.  Obtain a forest with $n+1$ edges by adjoining a new edge to the vertex labeled by $n$. Moving top to bottom and left to right, obtain new forests by adjoining the extra edge to each vertex.  Observe that these vertices are are in bijection with directed paths in $A^k_n$ ending at the appropriate vertex labeled by $n$.  Repeat for the other planar rooted forests with $n$ edges, in order.  Since this ordering was chosen to be compatible with the vertex labeling convention, we have assured that there is no repetition of graphs.  Thus we see that planar rooted forests $k+1$ trees and $n+1$ edges are in bijection with paths in $A^k_n$ ending at a vertex labeled by $n$, as we wished to show.
\end{proof}

\end{document}